\def\csname ver@etex.sty\endcsname{3000/12/31}
\crefname{lemma}{lemma}{lemmata}
\Crefname{lemma}{Lemma}{Lemmata}
\crefname{subsection}{subsection}{subsections}
\Crefname{subsection}{Subsection}{Subsections}
\title[Cut-and-join equation for monotone Hurwitz numbers revisited]{Cut-and-join equation for monotone Hurwitz numbers revisited}
\author[P.~Dunin-Barkowski]{P.~Dunin-Barkowski}
\address[P.~Dunin-Barkowski]{Faculty of Mathematics, National Research University Higher School of Economics, Usacheva 6, 119048 Moscow, Russia; and ITEP, 117218 Moscow, Russia}
\email{ptdunin@hse.ru}
\author[R.~Kramer]{R.~Kramer}
\address[R.~Kramer]{Korteweg-de Vriesinstituut voor Wiskunde, 
	Universiteit van Amsterdam, Postbus 94248,
	1090GE Amsterdam, Nederland}
\email{r.kramer@uva.nl}
\author[A.~Popolitov]{A.~Popolitov}
\address[A.~Popolitov]{Department of Physics and Astronomy, Uppsala University, Uppsala, Sweden; Institute for Information Transmission Problems, Moscow 127994, Russia; and ITEP, Moscow 117218, Russia}
\email{popolit@gmail.com}
\author[S.~Shadrin]{S.~Shadrin}
\address[S. Shadrin]{Korteweg-de Vriesinstituut voor Wiskunde, 
	Universiteit van Amsterdam, Postbus 94248,
	1090GE Amsterdam, Nederland}
\email{s.shadrin@uva.nl}
\newcommand{\cont}{\mathord{\textup{cr}}}
\newcommand{\cor}[1]{\big{\langle} 0 \big|\, #1 \, \big| 0 \big{\rangle}}
\DeclareMathOperator{\Res}{Res}
\newtheorem{theorem}{Theorem}[section]
\newtheorem{lemma}[theorem]{Lemma}
\theoremstyle{definition}
\newtheorem{remark}[theorem]{Remark}
\begin{document}

\begin{abstract}
We give a new proof of the cut-and-join equation for the monotone Hurwitz numbers, derived first by Goulden, Guay-Paquet, and Novak.  
The main interest in this particular equation is its close relation to the quadratic loop equation in the theory of spectral curve topological recursion, and we recall this motivation giving a new proof of the topological recursion for monotone Hurwitz numbers, obtained first by Do, Dyer, and Mathews.
\end{abstract}

\maketitle

\tableofcontents

\section{Introduction}

The monotone Hurwitz numbers were introduced by Goulden, Guay-Paquet, and Novak in relation to the HCIZ integral~\cite{GouldenHCIZ}, and they discovered many properties of these numbers that generalized known results for the usual Hurwitz numbers. In particular, they conjectured in~\cite{GouldenetalGenus0} that monotone Hurwitz numbers satisfy the spectral curve topological recursion~\cite{EynardOrantin}, and this conjecture was proved by Do, Dyer, and Mathews in~\cite{DoDyerMathews}. 

These numbers are related to hypergeometric tau-functions~\cite{HarnadOrlov}, and there is a number of new results and conjectures about them and their orbifold generalization, see~\cite{ACEH,ALS,DoKarev,KLS}. In particular, Do and Karev conjectured in~\cite{DoKarev} that orbifold monotone Hurwitz numbers also satisfy topological recursion, and provided the spectral curve data for it, but, to the best of our knowledge, their conjecture is still open (though some progress was achieved in~\cite{ACEH,KLS}). 

This goal of this paper is  to understand better the connection between the  operators $B_b^{\leq}$, $b\geq 1$, that produce monotone Hurwitz numbers in terms of the representation theory of the symmetric group~\cite{ALS}, and the form of the cut-and-join equation obtained by Goulden, Guay-Paquet, and Novak.  The eigenvalue of the operator $B_b^{\leq}$ on an irreducible representation indexed by a partition $\lambda$ is given by a complete homogeneous symmetric polynomial of degree $b$ of its content vector.

In the meanwhile, the cut-and-join equation~\cite[Theorem 1.2]{GouldenetalGenus0} seems to reflect the  exponential action of the second Casimir operator $C_2$, whose eigenvalue on an irreducible representation indexed by partition $\lambda=(\lambda_1\geq \cdots \geq \lambda_\ell>0)$ is given by 
\begin{equation*}
\frac 12 \sum_{i=1}^\ell \left[
\left( \lambda_i-i+\frac12 \right)^2
- \left( -i+\frac12 \right)^2
\right],
\end{equation*}
which is equal to the sum of the components of the content vector. There is no obvious way to connect $B_b^\geq$, $b\geq 1$, and $C_2$, and the goal of this paper is to give a new proof of the theorem of Goulden, Guay-Paquet, and Novak in such a way that this subtle point would be clarified.

Note that Goulden-Guay-Paquet-Novak's proof involves careful analysis of what happens with all the permutations under the ``cut" and two different ``join" operations, while our proof is completely different and is given in terms of the operators on the semi-infinite wedge (or, in other words, fermionic) space. One more alternative proof that works, however, only in genus $0$ was recently found by Carrell and Goulden~\cite{CarrellGoulden}.


Our main interest in analysing in detail how the cut-and-join operator of Goulden, Guay-Paquet, and Novak occurs in the theory of monotone Hurwitz numbers comes from the prominent role that this explicit operator plays in the theory of topological recursion. The technique developed in~\cite{BorotEynardOrantin,BorotShadrin} allows us to immediately derive the topological recursion statement from this particular shape of the cut-and-join operator, once we know the (quasi-)polynomiality property of monotone Hurwitz numbers.

The required polynomiality property was first proved in~\cite[Theorem 1.5]{GouldenPolynomiality}, and an alternative proof is available as a special case of~\cite[Theorem 5.2]{KLS}. So, applying these results, we immediately obtain a new proof of topological recursion for monotone Hurwitz numbers, first proved in~\cite[Theorem 1]{DoDyerMathews}.

We expect that the approach described in the present paper might be helpful in developing a proof of the aforementioned open Do-Karev conjecture on the topological recursion for the orbifold monotone case.

\subsection{Structure of the paper}
In \cref{sec:CJ} we recall one of the possible forms of the cut-and-join equation of Goulden, Guay-Paquet, and Novak for the monotone Hurwitz numbers and give a new proof for it.
In \cref{sec:TR} we recall the topological recursion statement of Do, Dyer, and Mathews, and give a new proof for it. 

\subsection{Acknowledgments}
P.D.-B. was supported by RFBR grant 16-31-60044-mol\_a\_dk.
S.S. and R.K. were supported by the Netherlands Organization for Scientific Research.
A.P. was supported by the
the grant ``Geometry and Physics'' from the Knut and Alice Wallenberg foundation%
. 

We thank Guoniu Han and Huan Xiong for useful remarks. 

\section{Monotone Hurwitz numbers and cut-and-join equation}\label{sec:CJ}

For a partition $\mu\vdash d$ we denote by $\ell(\mu)$ its length and by $\mu_1\geq \dots \geq \mu_{\ell(\mu)}$ its parts. We associate to $\mu$ a Young diagram that we denote also by $\mu$, whose boxes have row coordinates from $1$ to $\ell(\mu)$ and the column coordinates in the $i$-th row from $1$ to $\mu_i$. By $|\mu|$ we denote the number of boxes, that is $|\mu|:=d$.

A disconnected monotone Hurwitz number depends on a partition $\mu\vdash d$ of a positive integer $d$ (the degree) and on a genus $g\in \mathbb{Z}$ that can be, potentially, negative (since we have the disconnected case), but the parameter $b=2g-2+d+\ell(\mu)$ must be non-negative. It is given by 
\begin{equation*}
H^\bullet_{g,\mu} := \sum_{\lambda\vdash d} \frac{\dim \lambda}{d!} \frac{\chi_\lambda(\mu) } {\prod_{i=1}^{\ell(\mu)} \mu_i} h_{2g-2+d+\ell(\mu)} (\cont^\lambda_1,\dots,\cont^\lambda_d).
\end{equation*}
Here $h_{2g-2+d+\ell(\mu)}=h_b$ is the full homogeneous symmetric function of its variables of degree $b=2g-2+d+\ell(\mu)$, and $\cont^\lambda$ is the vector of contents of the standard Young tableau associated to the diagram $\lambda$ that is, if $c_i$ and $r_i$ are the column and the row indices of the box $i$, then $\cont^\lambda_i:=c_i-r_i$.
By $\dim \lambda$ we denote the dimension of the representation $\lambda$, and by $\chi_\lambda(\mu)$ the character of the representation $\lambda$ evaluated at the the conjugacy class of cycle type \( \mu \).

In order to relate this definition to the notation used in the introduction let us mention that $h_{b} (\cont^\lambda_1,\dots, \cont^\lambda_n)$, $b\geq 0$, can be considered as the eigenvalue of the action of the operator $B_b^\geq$ in the representation $\lambda$, where $B_b^\geq$ is defined as the central element given by the full homogeneous symmetric function of degree $b$ of the Jucys-Murphy elements $\mathcal{J}_2,\dots,\mathcal{J}_{|\lambda|}$, see details in~\cite{ALS}.


The generating function for the disconnected monotone Hurwitz numbers is defined as
\begin{equation*}
\mathcal{Z}(s,t,\mathbf{p}) := 1+\sum_{\substack{d\geq 1,\ \mu\vdash d,\ b\geq 0 \\ g:=\frac{b-d-\ell(\mu))}2 \in \mathbb{Z}
	}
} \frac{s^d}{d!} t^{b}  H^\bullet_{g,\mu} p_{\mu_1}\cdots p_{\mu_{\ell(\mu)}}
\end{equation*}
We give a new proof of the following cut-and-join equation for this function, which was first derived in~\cite[Theorem 1.2]{GouldenetalGenus0}.

\begin{theorem} 
We have:
	\begin{equation}\label{eq:CJ}
	\frac{1}{2t} \left[s\frac{\partial}{\partial s} - sp_1\right] \mathcal{Z}= 
	\frac 12
	\sum_{i,j=1}^\infty 
	\left[
	(i+j)p_ip_j \frac{\partial} {\partial p_{i+j}} + ij p_{i+j} \frac{\partial^2}{\partial p_i\partial p_j}
	\right]
	\mathcal{Z}.
	\end{equation}
\end{theorem}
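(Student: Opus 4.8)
The plan is to work on the semi-infinite wedge (fermionic Fock) space, where the generating function $\mathcal{Z}$ arises as a vacuum expectation value, and to recast both sides of \eqref{eq:CJ} as commutators of standard operators acting on this space. First I would write
\begin{equation*}
\mathcal{Z}(s,t,\mathbf{p}) = \cor{ \exp\!\Big(\tfrac{1}{t}\textstyle\sum_{\mu}\ldots\Big)\, \mathbb{G}(t)\, \exp\!\Big(\textstyle\sum_{n\geq 1}\tfrac{s^n p_n}{n}\alpha_{-n}\Big) }
\end{equation*}
more precisely, I would express $\mathcal{Z}$ as $\cor{ \mathbb{G}(s,t)\, e^{\sum s^n p_n \alpha_{-n}/n} }$ where $\mathbb{G}$ is the diagonal operator whose eigenvalue on the basis vector $v_\lambda$ is the hypergeometric weight $\frac{\dim\lambda}{d!}\sum_b t^b h_b(\cont^\lambda)$; equivalently $\mathbb{G} = \prod_{\square}(1 - t\,\cont(\square))^{-1}$ read off content by content, which is the hallmark of a hypergeometric tau-function in the sense of Harnad-Orlov. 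Using the standard translation between symmetric functions of contents and the energy/cut-and-join operators $\mathcal{F}_2$ (the ``$\widehat{W}$''-type operator whose eigenvalue on $v_\lambda$ is $\sum_i\binom{\lambda_i-i+1/2}{1} $-type content sums) in the fermionic formalism, the factor $t \partial_t$ acting on $\mathbb{G}$ pulls down exactly the operator whose diagonal action is $\sum_\square \cont(\square)$. This is the $C_2$ that the introduction alludes to: on the fermionic side it is the well-known operator $\mathcal{F}_2 = \sum_{k\in\mathbb{Z}+1/2} \tfrac{k^2}{2}\,{:}\psi_k\psi_k^*{:}$ (up to normalization), and the point of the whole argument is that conjugating the bosonization of $\mathcal{F}_2$ produces precisely the cut-and-join differential operator on the right-hand side of \eqref{eq:CJ}.

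The key steps, in order, would be: (1) set up the fermionic model and identify $\mathbb{G}$ and the boundary-insertion operator, checking that the stated $\mathcal{Z}$ is reproduced by expanding in Schur functions and using $\chi_\lambda(\mu) = \langle v_\lambda, \prod \alpha_{-\mu_i} v_\emptyset\rangle$-type pairings; (2) observe that $\frac{1}{t}\,\mathbb{G}^{-1} [\,t\partial_t,\mathbb{G}\,] $ acts diagonally as multiplication by $\sum_\square \cont(\square)$, hence equals $\mathcal{F}_2$ as an operator, while separately $s\partial_s$ acting on the boundary insertion measures the total number of boxes $d = \sum_\square 1 = |\lambda|$; (3) compute the commutator of the bosonized $\mathcal{F}_2$ with the boundary insertion $e^{\sum s^n p_n \alpha_{-n}/n}$ — this is the standard computation that $\mathcal{F}_2$ in bosonic variables is $\frac{1}{2}\sum_{i,j}\big[(i+j)p_ip_j\partial_{p_{i+j}} + ij\,p_{i+j}\partial_{p_i}\partial_{p_j}\big] + (\text{lower-order/Euler terms})$; (4) match the ``lower-order'' terms: the linear $p_1$-correction and the degree-counting $s\partial_s$ are exactly the combination $\frac{1}{2t}[s\partial_s - sp_1]$ appearing on the left, and the $\frac{1}{2t}$ is the Jacobian of the substitution $s^n p_n \mapsto$ in the exponent versus the $t$-grading of $\mathbb{G}$. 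Assembling (2)–(4) gives \eqref{eq:CJ}.

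The main obstacle I anticipate is step (3)–(4): getting the bosonization of $\mathcal{F}_2$ exactly right, including all the normal-ordering constants and the linear terms, and then bookkeeping how the $s$- and $t$-gradings interact so that the operator equality on Fock space descends to the precise differential-operator identity with the correct coefficient $\tfrac12$ and the correct shift $-sp_1$. In particular, the subtle point flagged in the introduction — why an operator built from $B_b^\leq$ (complete homogeneous symmetric functions of content, i.e. the ``$\geq$''/monotone side) nonetheless obeys a cut-and-join governed by $C_2$ — is resolved exactly here: it is not that $B_b^\leq$ equals $C_2$, but that the \emph{logarithmic} $t$-derivative of the hypergeometric weight $\prod_\square (1-t\,\cont(\square))^{-1}$ is, to leading order in the relevant filtration, the first power sum $\sum_\square \cont(\square)$, whose bosonization is the GGPN operator. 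I would end by remarking that the initial condition (the $s^1$, $d=1$ term) is immediate and fixes $\mathcal{Z}$ uniquely given \eqref{eq:CJ}, so the operator identity indeed characterizes the generating function.
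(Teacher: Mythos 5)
Your overall frame---realising $\mathcal{Z}$ as a vacuum expectation value $\cor{e^{\sum_i\alpha_ip_i/i}\,\mathcal{D}^{(h)}(t)\,e^{s\alpha_{-1}}}$ with a diagonal hypergeometric weight, and recognising the right-hand side of \eqref{eq:CJ} as the bosonization of $\mathcal{F}_2$---is exactly the paper's, and your steps (1), (3), and the bookkeeping of the $-sp_1$ term are sound. The gap is your step (2). The operator $\frac1t\,\mathbb{G}^{-1}[t\partial_t,\mathbb{G}]$ does \emph{not} act as multiplication by $\sum_{\Box}\cont^\lambda_\Box$: the logarithmic $t$-derivative of $\prod_\Box(1-t\,\cont^\lambda_\Box)^{-1}$ has eigenvalue $\sum_\Box \cont^\lambda_\Box/(1-t\,\cont^\lambda_\Box)$, which agrees with the content sum only at $t=0$. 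Your closing caveat ``to leading order in the relevant filtration'' concedes this, but \eqref{eq:CJ} is an exact identity in $t$, not a leading-order one. Moreover, \eqref{eq:CJ} contains no $t$-derivative at all, so even a correct computation of $[t\partial_t,\mathbb{G}]$ would not connect to the left-hand side you must control.

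The mechanism that actually closes the argument is different: the left-hand side of \eqref{eq:CJ} equals $\frac{s}{2t}\cor{e^{\sum_i\alpha_ip_i/i}\,[\mathcal{D}^{(h)}(t),\alpha_{-1}]\,e^{s\alpha_{-1}}}$, and since adding a box $\Box_{\nu/\lambda}$ to $\lambda$ multiplies the eigenvalue of $\mathcal{D}^{(h)}(t)$ by the single factor $(1-t\,\cont^\nu_{\Box_{\nu/\lambda}})^{-1}$, the commutator contributes the weight $t\,\cont^\nu_{\Box_{\nu/\lambda}}/(1-t\,\cont^\nu_{\Box_{\nu/\lambda}})$ per added box. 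Comparing coefficients of $v_\nu$ against the $\mathcal{F}_2$ side, whose eigenvalue is $\sum_{\Box\in\nu}\cont^\nu_\Box$, reduces the theorem to the identity, for every Young diagram $\nu$,
\begin{equation*}
|\nu|\sum_{\lambda\in\nu\setminus 1}\dim\lambda\cdot\cont^\nu_{\Box_{\nu/\lambda}} \;=\; 2\,\dim\nu\sum_{\Box\in\nu}\cont^\nu_\Box,
\end{equation*}
i.e.\ Lemma~\ref{lem:combidentity}. This is the genuinely nontrivial combinatorial heart of the proof---the paper establishes it either by computing $\mathrm{Tr}_{V^\nu}\mathcal{J}_{|\nu|}$ in two ways or via Han's $g$-function theorem---and your proposal contains no substitute for it; it is precisely the ``subtle point'' relating $B_b^\geq$ to $C_2$ that the introduction flags. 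Your final observation that \eqref{eq:CJ} plus the $d=1$ initial condition determines $\mathcal{Z}$ uniquely is true but does not help prove that the equation holds.
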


\begin{proof}
  It is convenient to rewrite \cref{eq:CJ} using the semi-infinite wedge formalism interpretation of the monotone Hurwitz developed in~\cite{ALS}. We do not repeat here the definition of the semi-infinite wedge space and the relevant operators,
  as it can be found in~\cite[Section 2]{KLS}.
  Moreover, in physics literature semi-infinite wedge formalism is widely known under the name of
  ``free-fermion formalism'' and is a standard tool in theory of KP/Toda integrability of matrix models
  ~\cite[Section 4.2]{MorMMIntS}.
  In the standard notation we have:
\begin{equation*}
\mathcal{Z}(s,t,\mathbf{p}) =  \cor{ e^{\sum_{i=1}^\infty \frac{\alpha_ip_i}{i}} \mathcal{D}^{(h)}(t) e^{\alpha_{-1}s} } ,
\end{equation*}
where the operator $\mathcal{D}^{(h)}(t)$ acts diagonally on the basis vectors
\begin{equation*}
v_\lambda\coloneq \underline{(\lambda_1-\frac 12)}\wedge \underline{(\lambda_2-\frac 32)}\wedge \underline{(\lambda_3-\frac 52)}\wedge \dots
\end{equation*}
with the eigenvalue given by the action of $\sum_{b=0}^\infty B^\geq_b t^b$ in the representation $\lambda$.

\Cref{eq:CJ} is equivalent to the following one:
\begin{align} \label{eq:CJ-cor}
& \frac s{2t}  \left[ \cor{ e^{\sum_{i=1}^\infty \frac{\alpha_ip_i}{i}} \mathcal{D}^{(h)}(t) \alpha_{-1} e^{\alpha_{-1}s} } -
\cor{ e^{\sum_{i=1}^\infty \frac{\alpha_ip_i}{i}} \alpha_{-1} \mathcal{D}^{(h)}(t) e^{\alpha_{-1}s} }
\right]
\\ \notag &
= \cor{ e^{\sum_{i=1}^\infty \frac{\alpha_ip_i}{i}} \mathcal{D}^{(h)}(t) \mathcal{F}_2 e^{\alpha_{-1}s} }.
\end{align}
In order to understand the left hand side of this equation, we have to compute $[\mathcal{D}^{(h)}(t) \alpha_{-1}] e^{\alpha_{-1}s}  \,\big|0\big>$. By definition, $\mathcal{D}^{(h)}(t)v_\lambda = \prod_{i=1}^{|\lambda|} (1-t\cdot cr_\lambda^i)^{-1}$. For any $\lambda\vdash d$ we denote by $\lambda\setminus 1$ set of partitions $\nu\vdash d-1$ whose Young diagram can be obtained by removing one corner box from the Young diagram of $\lambda$.  We denote this special corner box by $\Box_{\lambda/\nu}$. Recall that $\dim\lambda = \sum_{\nu\in\lambda\setminus 1} \dim \nu$, by the Murnaghan-Nakayama rule. The action of $\alpha_{-1}$ is given by $\alpha_{-1}v_\lambda = \sum_{\nu\setminus 1 \ni \lambda} v_\nu$. Using these formulas, we have:
\begin{align*}
 [\mathcal{D}^{(h)}(t),\alpha_{-1}]e^{s\alpha_{-1}} \,\big|0\big> 
& = \sum_{\lambda} \left[\prod_{\Box \in \lambda} \frac 1{1 - t \, \cont^\lambda_\Box} \right] s^{|\lambda|} \frac {\dim\lambda} {|\lambda|!} \sum_{\nu \setminus 1 \ni \lambda} \frac{
	t\, \cont^\nu_{\Box_{\nu/\lambda}}
}{
1-t\, \cont^\nu_{\Box_{\nu/\lambda}}
} v_\nu
\\
& = t\sum_{\nu} \left[\prod_{\Box \in \nu} \frac 1{1-t \, \cont^\nu_\Box} \right] \frac{s^{|\nu|-1}}{(|\nu|-1)!} \sum_{\lambda\in\nu\setminus 1} \cont^\nu_{\Box_{\nu/\lambda}} \dim\lambda\, v_\nu.
\end{align*}
On the right hand side we recall that $\mathcal{F}_2v_\lambda = C_2(\lambda)v_\lambda$, and we have 
\begin{equation*}
 \mathcal{D}^{(h)}(t)\mathcal{F}_2e^{s\alpha_{-1}} \,\big|0\big> = \sum_{\nu} \left[\prod_{\Box \in \nu} \frac 1{1-t \, \cont^\nu_\Box} \right]
 \frac{s^{|\nu|}}{|\nu|!} \dim\nu \left[\sum_{\Box \in\nu} \cont^\nu_\Box \right] v_\nu.
\end{equation*}
So, \cref{eq:CJ-cor} is equivalent to the following statement that should be true for any Young diagram $\nu$:
\begin{equation*}
|\nu| \sum_{\lambda\in\nu\setminus 1} \dim\lambda\cdot \cont^\nu_{\Box_{\nu/\lambda}} = 2 \dim\nu \sum_{\Box \in \nu} \cont^\nu_\Box
\end{equation*}
Recall that $\dim\nu = |\nu|!/H_{\nu}$, where by $H_\nu$ we denote the product of the hook lengths of all boxes in $\nu$. Thus we reduce \cref{eq:CJ-cor} to the following equation for any Young diagram $\nu$:
\begin{equation*}
\sum_{\lambda\in\nu\setminus 1} \frac{cr^\nu_{\Box_{\nu/\lambda}}}{H_\lambda} = \frac{2}{H_\nu} \sum_{\Box\in\nu} cr^\nu_\Box. 
\end{equation*}
We prove this equation below, see Lemma~\ref{lem:combidentity}, and this completes the proof of the theorem.
\end{proof}

\begin{remark} Note how amazing and combinatorially non-trivial  the occurrence of the operator $\mathcal{F}_2$ is in the case of the monotone Hurwitz numbers. Compare it with  the case of the usual Hurwitz numbers, where the generating function is given by $ \cor{ e^{\sum_{i=1}^\infty \frac{\alpha_ip_i}{i}} e^{t\mathcal{F}_2} e^{\alpha_{-1}} } $ and the standard cut-and-join equation
\begin{equation*}
\frac{\partial}{\partial t}\cor{ e^{\sum_{i=1}^\infty \frac{\alpha_ip_i}{i}} e^{t\mathcal{F}_2} e^{\alpha_{-1}} } 
= \cor{ e^{\sum_{i=1}^\infty \frac{\alpha_ip_i}{i}} \mathcal{F}_2 e^{t\mathcal{F}_2} e^{\alpha_{-1}} } 
\end{equation*}	
is in this form natural and does not require any further computation. 	
\end{remark}

\begin{lemma} \label{lem:combidentity} 
For any Young diagram $\nu$ we have: 
\begin{equation*} 
\sum_{\lambda\in\nu\setminus 1} \frac{\cont^\nu_{\Box_{\nu/\lambda}}}{H_\lambda} = \frac{2}{H_\nu} \sum_{\Box\in\nu} \cont^\nu_\Box. 
\end{equation*}
\end{lemma}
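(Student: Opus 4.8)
The plan is to turn the identity into a residue computation on $\mathbb{P}^1$. Fix an integer $N\ge\ell(\nu)$, pad $\nu$ with zero parts up to length $N$, and put $\beta_i:=\nu_i+N-i$ for $i=1,\dots,N$, so that $\beta_1>\dots>\beta_N\ge 0$ are distinct integers. The classical hook-length formula can be written as $H_\nu=\prod_{i=1}^N\beta_i!\big/\prod_{i<j}(\beta_i-\beta_j)$. Removing the corner in row $i$ — when $\nu$ has one there, in which case the result is some $\lambda\in\nu\setminus 1$ with $\Box_{\nu/\lambda}$ in row $i$ — amounts to $\beta_i\mapsto\beta_i-1$, and a short manipulation of the hook-length formula gives $H_\nu/H_\lambda=\beta_i\prod_{k\ne i}\frac{\beta_i-\beta_k-1}{\beta_i-\beta_k}$, while the content of that corner is $\cont^\nu_{\Box_{\nu/\lambda}}=\nu_i-i=\beta_i-N$. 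Multiplying the asserted identity by $H_\nu$ thus rewrites it as
\[
\sum_i(\beta_i-N)\,\beta_i\prod_{k\ne i}\frac{\beta_i-\beta_k-1}{\beta_i-\beta_k}=2\sum_{\Box\in\nu}\cont^\nu_\Box .
\]
A priori the left sum runs only over rows carrying a removable corner, but for every other $i$ the corresponding term vanishes — either because the factor $\beta_i-\beta_{i+1}-1$ equals zero (this happens exactly when $\nu_i=\nu_{i+1}$), or because $\beta_i=0$ (a padded row) — so we may as well extend it to all $i=1,\dots,N$.

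Set $V(x):=\prod_{k=1}^N(x-\beta_k)$. Since $\prod_{k\ne i}(\beta_i-\beta_k-1)=-V(\beta_i-1)$ and $\prod_{k\ne i}(\beta_i-\beta_k)=V'(\beta_i)$, the left-hand side above equals $-\sum_{i=1}^N\frac{\beta_i(\beta_i-N)V(\beta_i-1)}{V'(\beta_i)}=-\sum_{i=1}^N\Res_{x=\beta_i}\omega$, where $\omega:=\frac{x(x-N)V(x-1)}{V(x)}\,dx$. The only poles of the rational $1$-form $\omega$ on $\mathbb{P}^1$ lie among $\beta_1,\dots,\beta_N$ and $\infty$, and the sum of all residues of a rational $1$-form on $\mathbb{P}^1$ is zero; hence the left-hand side equals $\Res_{x=\infty}\omega$.

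To evaluate $\Res_{x=\infty}\omega$, expand $\frac{V(x-1)}{V(x)}=\prod_{k=1}^N\big(1-\tfrac1{x-\beta_k}\big)$ in powers of $x^{-1}$ up to order $x^{-3}$; the relevant coefficients are explicit polynomials in $N$ and the power sums $p_1=\sum_k\beta_k$, $p_2=\sum_k\beta_k^2$, and reading off the coefficient of $x^{-1}$ in $x(x-N)\cdot\frac{V(x-1)}{V(x)}$ yields $\Res_{x=\infty}\omega=p_2-(2N-1)p_1+\tfrac23N^3-N^2+\tfrac13N$. On the other side, substituting $\beta_i=\nu_i+N-i$ into the contents gives $2\sum_{\Box\in\nu}\cont^\nu_\Box=\sum_{i=1}^N\nu_i(\nu_i-2i+1)=\sum_{i=1}^N\big[(\beta_i-N)^2+(\beta_i-N)-i(i-1)\big]$, and a direct summation turns this into the very same polynomial $p_2-(2N-1)p_1+\tfrac23N^3-N^2+\tfrac13N$. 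Comparing the two expressions proves the lemma. The only real labour here is these two elementary computations in the last step, together with the small bookkeeping check that non-removable rows contribute nothing; there is no conceptual obstacle, and the outcome is manifestly independent of the auxiliary parameter $N$, as it has to be.
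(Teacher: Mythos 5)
Your proof is correct, and it takes a genuinely different route from both arguments in the paper. The paper first proves the identity representation-theoretically, by computing the trace of the Jucys--Murphy element $\mathcal{J}_{|\nu|}$ on $V^\nu$ in two ways (via the branching rule $V^\nu=\oplus_{\lambda\in\nu\setminus 1}V^\lambda$, and via writing $\mathcal{J}_{|\nu|}$ as a sum of $|\nu|-1$ transpositions all having equal trace); it then gives a second, combinatorial proof that imports Han's theorem on $g$-functions of partitions and extracts the coefficient of $w^3$ after the substitution $x=|\nu|/2+1/w$. You instead work directly with the shifted parts $\beta_i=\nu_i+N-i$, encode the ratio $H_\nu/H_\lambda$ through the polynomial $V(x)=\prod_k(x-\beta_k)$, and convert the left-hand side into $-\sum_i\Res_{x=\beta_i}\omega$ for $\omega=\frac{x(x-N)V(x-1)}{V(x)}dx$, which the residue theorem on $\mathbb{P}^1$ trades for $\Res_{x=\infty}\omega$; I have checked the two closing computations ($\Res_{x=\infty}\omega$ and $2\sum_{\Box\in\nu}\cont^\nu_\Box$ both equal $p_2-(2N-1)p_1+\tfrac23N^3-N^2+\tfrac13N$), the formula $H_\nu/H_\lambda=\beta_i\prod_{k\ne i}\frac{\beta_i-\beta_k-1}{\beta_i-\beta_k}$, and the bookkeeping that non-removable rows contribute vanishing terms, and all are right. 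What your approach buys is self-containedness: it needs neither the Vershik--Okounkov machinery nor Han's theorem as a black box (indeed, it is close in spirit to how one proves Han's identity itself, and even the auxiliary fact $\sum_{\lambda\in\nu\setminus 1}H_\lambda^{-1}=|\nu|H_\nu^{-1}$ used in the paper's second proof is just the lower-order part of your single residue). What it gives up is the conceptual transparency of the trace argument and the explicit link to the content-evaluation formalism that the authors want to exploit for the orbifold generalization.
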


This lemma might be obvious for the experts in the representation theory of the symmetric group, as it can be derived from the Vershik-Okounkov approach~\cite{VershikOkounkov} (we thank experts for pointing this out). However, with a view towards its generalizations for symmetric functions of the contents of skew hooks (that would be necessary if one tries to generalize the same approach that we use here to the orbifold case), we feel that the recent papers by Dehaye, Han, and Xiong~\cite{HanShifted,Dehaye,HanXiong} provide the most suitable combinatorial tools. So, we also give an alternative proof of this lemma that derives it from a remarkable result of Han on the so-called $g$-functions of  integer partitions~\cite[Theorem 1.1]{HanShifted}. Note that this theorem of Han has already been applied in the theory of topological recursion in a completely different context, see~\cite{DbMNPS}.

\begin{proof}[Proof using the action of Jucys-Murphy elements in representations of symmetric groups] 
Recall that the elements of the Gelfand-Tsetlin (Young) basis of the representation $V^\nu$ of the symmetric group $S_{|\nu|}$ are identified with Young tableaux associated with $\nu$. Recall also that the action of the vector of the Jucys-Murphy elements $(\mathcal{J}_1=0,\mathcal{J}_2,\dots,\mathcal{J}_{|\nu|})$ in the Gelfand-Tsetlin basis is diagonal, and the vector of eigenvalues on a particular Young tableau is given by its vector of contents~\cite{VershikOkounkov}.
	
Consider the operator $\mathcal{J}_{|\nu|}$ acting on $V^{\nu}$. We compute its trace in two different ways. 

On the one hand, it commutes with the action of $S_{|\nu|-1}$. Therefore it acts by scalar multiplication on the irreducible factors of $V^{\nu}$ considered as a representation of $S_{|\nu|-1}$. Since $V^{\nu}$ splits as $\oplus_{\lambda\in\nu\setminus 1} V^{\lambda}$ and the action of $\mathcal{J}_{|\nu|}$ restricted to $V^{\lambda}$ is given by scalar multiplication by $\cont^\nu_{\Box_{\nu/\lambda}}$, we have 
\begin{equation}
\label{eq:TrJ1}
\mathrm{Tr}_{V^\nu} \mathcal{J}_{|\nu|} = 
\sum_{\lambda\in\nu\setminus 1} \dim\lambda \cdot \cont^\nu_{\Box_{\nu/\lambda}} = 
(|\nu|-1)!\sum_{\lambda\in\nu\setminus 1} 
\frac{\cont^\nu_{\Box_{\nu/\lambda}}} {H_\lambda}.
\end{equation}

On the other hand, $\mathcal{J}_{|\nu|}$ is just a sum of $|\nu|-1$ transpositions, and the trace of any transposition on $V^{\nu}$ is the same. Thus $\mathrm{Tr}_{V^\nu} \mathcal{J}_{|\nu|}$ is equal to $(2/|\nu|) \mathrm{Tr}_{V^\nu} (\mathcal{J}_2+\cdots+\mathcal{J}_{|\nu|})$. Note that operator $\mathcal{J}_2+\cdots+\mathcal{J}_{|\nu|}$ acts on $V^\nu$ simply by scalar multiplication by $\sum_{\Box\in\nu} \cont^\nu_\Box$. Thus we have:
\begin{equation}
\label{eq:TrJ2}
\mathrm{Tr}_{V^\nu} \mathcal{J}_{|\nu|} = 
\frac{2}{|\nu|} \dim\nu \sum_{\Box\in\nu} \cont^\nu_\Box= 
2\frac{(|\nu|-1)!}{H_\nu} \sum_{\Box\in\nu} \cont^\nu_\Box.
\end{equation}

Equating the right hand sides of the two expressions \eqref{eq:TrJ1}, \eqref{eq:TrJ2} for $\mathrm{Tr}_{V^\nu} \mathcal{J}_{|\nu|}$ gives the statement of the lemma.
\end{proof}

\begin{proof}[Combinatorial proof using Han's $g$-functions] The special case of $\nu=(1\geq1\geq\cdots\geq 1)$ can be checked directly. So we assume that $\ell(\nu) < |\nu|$ in the rest of the proof.
	
We use a result of Han, see~\cite[Theorem 1.1]{HanShifted}. Han defines a $g$-function of a partition $\nu$ as 
	\begin{equation*}
		g_\nu(x):=\prod_{i=1}^{|\nu|} (x+\nu_i-i).
	\end{equation*}
It is proved in \cite[Theorem 1.1]{HanShifted} that for any Young diagram $\nu$
	\begin{equation*}
 \sum_{\lambda\in\nu\setminus 1} \frac{g_\lambda(x)}{H_\lambda} = 		\frac{g_\nu(x+1)-g_\nu(x)}{H_\nu} .
	\end{equation*}
We rewrite this as 	
\begin{equation}\label{eq:HanTheorem}
\sum_{\lambda\in\nu\setminus 1} \frac{g_\lambda(x)}{g_\nu(x)} \frac 1 {H_\lambda}
=
\frac{\frac{g_\nu(x+1)}{g_\nu(x)}-1}{H_\nu} .
\end{equation}
and substitute $x=|\nu|/2+1/w$ with an intention to compare the coefficients of $w^3$ on both sides of this equation. This might seem arbitrary, but taking exactly these coefficients in the $w$-expansions of both sides of this equality produces precisely the result we claim in this Lemma, as shown below.

On the left hand side, let us assume that the coordinates of the box $\Box_{\nu/\lambda}$ are $(\nu_j,j)$. Then
\begin{equation*}
\frac{g_\lambda(x)}{g_\nu(x)} = \frac{
	(\textstyle\frac{|\nu|}2 +\textstyle\frac{1}{w} + \nu_j -j -1)
}{
	(\textstyle\frac{|\nu|}2 +\textstyle\frac{1}{w} + \nu_j -j )
		(\textstyle\frac{|\nu|}2 + \textstyle\frac{1}{w}  -|\nu|)
},
\end{equation*}
and the coefficient of $w^3$ is given by $(\nu_j-j)+|\nu|^2/4 $. Since  $\sum_{\lambda\in\nu\setminus 1} H_\lambda^{-1} = |\nu| H_\nu^{-1}$, the coefficient of $w^3$ on the left hand side of \cref{eq:HanTheorem} is equal to
\begin{equation}\label{eq:LHS}
 \frac{|\nu|^3}{4H_\nu} + \sum_{\lambda\in\nu\setminus 1} \frac{\cont^\nu_{\Box_{\nu/\lambda}}}{H_\lambda} .
\end{equation}

Now we compute the coefficient of $w^3$ on the right hand side of \cref{eq:HanTheorem}. We have:
\begin{equation*}
\frac{g_\nu(|\nu|/2+1/w+1)}{g_\nu(|\nu|/2+1/w)} = \prod_{i=1}^{|\nu|} (1+ w + w^2(i-\nu_i-\textstyle\frac{|\nu|}2)+w^3(i-\nu_i-\textstyle\frac{|\nu|}2)^2+\cdots),
\end{equation*}
and the coefficient of $w^3$ in this expression is equal to 
\begin{equation*}
\sum_{i=1}^{\ell(\nu)} \nu_i(\nu_i-2i+1) + \frac{|\nu|^3}4
= 2 \sum_{\Box\in\nu} \cont^\nu_\Box + \frac{|\nu|^3}4.
\end{equation*}
Thus the coefficient of $w^3$ on the right hand side of \cref{eq:HanTheorem} is equal to
\begin{equation}\label{eq:RHS}
\frac{|\nu|^3}{4H_\nu} + \frac{2}{H_\nu} \sum_{\Box\in\nu} \cont^\nu_\Box.
\end{equation}
Han's theorem, in the form of equation \eqref{eq:HanTheorem}, thus implies that expressions \eqref{eq:LHS} and \eqref{eq:RHS} must be equal, which leads to the statement of the lemma.
\end{proof}

\section{Topological recursion for monotone Hurwitz numbers}\label{sec:TR}

Topological recursion is a technique to reconstruct in a universal way solutions to various enumerative geometry problems using a small set of initial data. It was originally developed by Chekhov, Eynard, and Orantin
in the context
of matrix model theory, see~\cite{EynardOrantin}. In the case of the monotone Hurwitz numbers we specify the following input data: The curve is $\mathbb{C}\mathrm{P}^1$, the basic functions are $x=(z-1)/z^2$, $y=-z$, and the Bergman kernel is $B=dz_1 dz_2/ (z_1-z_2)^2$. The function $x$ has a unique critical point at $z=2$, and we denote by $\sigma$ the deck transformation in the neighborhood of this point. 
Using this data one reconstructs symmetric $n$-differentials $\omega_{g,n}(z_1,\dots,z_n)$ by the rule
\begin{align*}
\omega_{0,1} (z_1)\coloneq \ &  -y(z_1)dx(z_1); 
\\
\omega_{0,2} (z_1,z_2) \coloneq \ & B(z_1,z_2);
\\
\omega_{g,n+1} (z_0,z_{[n]}) \coloneq \ & \Res \limits_{z=2} \frac{\int_z^{\sigma(z)} B(z_0,\cdot)}{ydx(\sigma(z))-ydx(z)} \bigg[
\omega_{g,n+2}(z,\sigma(z),z_{[n]})  \\
& + \sum'_{\substack{
h+k=g \\
I\sqcup J = [n]
}}
\omega_{h,|I|+1}(z,z_I)
\omega_{k,|J|+1}(\sigma(z),z_J)
\bigg]
\end{align*}
Here by $\sum'$ we mean that we take the sum excluding the cases when $(h,|I|+1)$ or $(k,|J|+1)$ are equal to $(0,1)$, and by $z_S$ for any set $S\subset [n]=\{1,\dots,n\}$ we denote the tuple of variables indexed by $S$.

The connected monotone Hurwitz numbers $H^\circ_{g,\mu}$ are defined in terms of the disconnected monotone Hurwitz numbers via the inclusion-exclusion formula. The generating function is given by
\begin{equation*}
\log\mathcal{Z}(s,t,\mathbf{p}) \eqcolon \sum_{\substack{d\geq 1,\ \mu\vdash d,\ b\geq 0 \\ g:=\frac{b-d-\ell(\mu))}2 \in \mathbb{Z}_{\geq 0}}
	}
\frac{s^d}{d!} t^{b}  H^\circ_{g,\mu} p_{\mu_1}\cdots p_{\mu_{\ell(\mu)}}
\end{equation*}
Denote by $F_{g,n}$ the $n$-point function $\sum_{\ell(\mu)=n}   H^\circ_{g,\mu} \prod_{i=1}^n x_i^{\mu_i}$.

We give a new proof of the following theorem of Do, Dyer, and Mathews that connects the formal expansion of differentials $\omega_{g,n}$ in the variable $x$ and the $n$-point functions.

\begin{theorem}[\cite{DoDyerMathews}] We have: $\omega_{0,1} = d F_{0,1}$, $\omega_{0,2} = d_1d_2F_{0,2} + dx_1dx_2/(x_1-x_2)^2$, and 
$\omega_{g,n} = d_1\cdots d_n F_{g,n}$ for $2g-2+n>0$.
\end{theorem}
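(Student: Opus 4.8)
The plan is to derive the theorem from the cut-and-join equation \eqref{eq:CJ} via the general machinery of \cite{BorotShadrin} (see also \cite{BorotEynardOrantin}), which converts a cut-and-join equation of this precise shape, together with the (quasi-)polynomiality of the numbers, into a topological recursion statement on a spectral curve whose ramification structure is dictated by the linear differential operator on the left-hand side. First I would rewrite the connected version of \eqref{eq:CJ}: applying $\log$ and extracting the coefficient of $\prod p_{\mu_i}$ gives a recursion expressing $H^\circ_{g,\mu}$ in terms of monotone Hurwitz numbers with one fewer part (the ``cut'' term $(i+j)p_ip_j\partial_{p_{i+j}}$ with its splitting into connected components) or with the same number of parts but lower $b$ (the ``join'' term $ij\,p_{i+j}\partial_{p_i}\partial_{p_j}$, which on the connected side produces both a genus-reduction term and a join-of-two-factors term). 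Encoding this through the $n$-point functions $F_{g,n}$ after the change of variables $p_k \leftrightarrow$ the expansion variables, one obtains a functional equation for $d_1\cdots d_n F_{g,n}$ that already has the structure of the Eynard--Orantin recursion's residue, provided one identifies the spectral curve data.

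Second, I would pin down the spectral curve. The operator $\tfrac{1}{2t}[s\partial_s - sp_1]$ on the left of \eqref{eq:CJ} is exactly the kind of operator whose ``disk function'' and ``cylinder function'' produce $\omega_{0,1}$ and $\omega_{0,2}$; concretely one checks that the generating function of $H^\circ_{0,(\mu_1)}$ (one part) and of $H^\circ_{0,(\mu_1,\mu_2)}$ (two parts) are the ones matching $-y\,dx$ and $B - dx_1dx_2/(x_1-x_2)^2$ for $x=(z-1)/z^2$, $y=-z$. This is a genus-$0$, one-part and two-part computation that can be done explicitly (for instance by Lagrange inversion applied to $x(z)$), and it is where the concrete curve $x=(z-1)/z^2$, $y=-z$, with its single critical point at $z=2$ and deck transformation $\sigma$, enters. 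The polynomiality input needed to run \cite{BorotShadrin} is \cite[Theorem 1.5]{GouldenPolynomiality} (equivalently \cite[Theorem 5.2]{KLS}): it guarantees that $F_{g,n}$, after multiplication by the appropriate monomials and in the right variables, extends to a global object on $\mathbb{C}\mathrm{P}^1$ with poles only at the critical point, so that the formal recursion can be rewritten as a residue at $z=2$.

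Third, with the base cases $\omega_{0,1}=dF_{0,1}$ and $\omega_{0,2}=d_1d_2F_{0,2}+dx_1dx_2/(x_1-x_2)^2$ established, I would prove $\omega_{g,n}=d_1\cdots d_n F_{g,n}$ for $2g-2+n>0$ by induction on $2g-2+n$, showing that the functional equation coming from the connected cut-and-join equation coincides, term by term, with the Eynard--Orantin residue formula: the genus-reduction term matches $\omega_{g-1,n+1}(z,\sigma(z),z_{[n]})$, and the join term matches the primed sum $\sum' \omega_{h,|I|+1}\omega_{k,|J|+1}$, while the linear operator on the left is responsible for the kernel $\int_z^{\sigma(z)}B(z_0,\cdot)/(ydx(\sigma(z))-ydx(z))$ and the extraction of the residue. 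The main obstacle I expect is precisely this matching step: translating the ``cut'' differential operator acting on $p$-variables into the recursion kernel requires care with the change of variables and with the symmetry factors, and one must verify that the quadratic loop equation (the analytic input that \eqref{eq:CJ} is designed to encode, as emphasized in the abstract and introduction) holds in the form needed by \cite{BorotShadrin}; the bookkeeping of which $(h,I)$, $(k,J)$ pairs are excluded, and the correct handling of the $(0,1)$ and $(0,2)$ boundary contributions, is the delicate part. Once that dictionary is in place, the induction closes and the theorem follows.
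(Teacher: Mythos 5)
Your ingredients are exactly the paper's: the hand-checked base cases $(0,1)$ and $(0,2)$, the polynomiality of \cite[Theorem 1.5]{GouldenPolynomiality} (or \cite[Theorem 5.2]{KLS}) to promote the formal series $F_{g,n}$ to global rational functions on the curve with poles only at $z=2$, the connected form of \cref{eq:CJ}, and the general machinery of \cite{BorotShadrin}. The overall route is therefore the same. Where you diverge is in how the cut-and-join equation is fed into the recursion. The paper does \emph{not} attempt to match the connected cut-and-join recursion term by term with the Eynard--Orantin residue formula, nor does it run an induction on $2g-2+n$: it invokes \cite[Theorem 2.2]{BorotShadrin}, by which (given the projection/polynomiality property and the correct $\omega_{0,1}$, $\omega_{0,2}$) topological recursion is \emph{equivalent} to the single local statement that
\begin{equation*}
DF_{g,n+2}(z,\sigma(z),z_{[n]}) + \sum_{h+k=g,\ I\sqcup J=[n]} DF_{h,|I|+1}(z,z_I)\, DF_{k,|J|+1}(\sigma(z),z_J)
\end{equation*}
is holomorphic near $z=2$ (the quadratic loop equation), and then observes that this holomorphicity follows from the connected cut-and-join equation after rewriting it in $n$-point functions and symmetrizing under the deck transformation $\sigma$, as in \cite{DBKOSS,DLPS}. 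All the inductive bookkeeping you worry about is hidden inside the proof of \cite[Theorem 2.2]{BorotShadrin} and does not need to be redone.

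The one place your plan, taken literally, would bog down is precisely the ``term-by-term'' identification: \cref{eq:CJ} is a global functional equation in the spectral parameter obtained by extracting coefficients of monomials in the $p$-variables, while the recursion is a residue at the branch point $z=2$ built from the kernel $\int_z^{\sigma(z)}B(z_0,\cdot)\,/\,(y\,dx(\sigma(z))-y\,dx(z))$; there is no direct one-to-one correspondence between the cut/join terms and the terms inside the residue. Note also that the left-hand side of \cref{eq:CJ} (the operator $\frac{1}{2t}[s\partial_s - sp_1]$) does not ``produce the recursion kernel'' as you suggest; in the paper's argument its only role is that its $\sigma$-symmetrization is holomorphic at $z=2$, so it drops out of the loop equation. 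Since you do flag the quadratic loop equation as the analytic statement to verify ``in the form needed by \cite{BorotShadrin}'', you have the right endpoint in view; the fix is simply to make that the whole of the final step and discard the term-by-term matching and the induction.
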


\begin{proof}
	First, one has to check by hand the cases of $(g,n)=(0,1)$ and $(0,2)$. It is done in~\cite[Lemma 9 and Proposition 14]{DoDyerMathews}, see also~\cite[Theorems A.1 and A.3]{KLS}. Then, one has to check that the formal power series $F_{g,n}$ is indeed an expansion of the products of certain functions on the curve, that is, the multiple $\partial/\partial x$ derivatives of $1/(z-2)$. This is equivalent to the polynomiality property proved in~\cite[Theorem 1.5]{GouldenPolynomiality}, see also~\cite[Theorem 5.2]{KLS} (note that this property is highly non-trivial). Once these preliminary steps are completed, the topological recursion is equivalent to the quadratic loop equation, see~\cite[Theorem 2.2]{BorotShadrin}. Namely, we have to prove that the functions 
	\begin{equation*}
	DF_{g,n}(z_{[n]}) \coloneq \frac{d_1\cdots d_n F_{g,n}}{dx_1\cdots dx_n}
	\end{equation*}
	(note that the polynomiality property allows us to consider these expressions not as formal power series, but as global functions defined on $\mathbb{C}\mathrm{P}^1$) satisfy the following property:
	\begin{equation*}
	DF_{g,n+2}(z,\sigma(z),z_{[n]})  + \sum_{\substack{
			h+k=g \\
			I\sqcup J = [n]
	}}
	DF_{h,|I|+1}(z,z_I)
	DF_{k,|J|+1}(\sigma(z),z_J)
	\end{equation*}
	is holomorphic in $z$ in the neighborhood of $z=2$.
	In order to prove this, we use the cut-and-join equation proved in the previous section. Namely, consider \cref{eq:CJ} specialized for the \emph{connected} monotone Hurwitz numbers:
	\begin{align*}
	& \frac{1}{2t} \left[s\frac{\partial \log\mathcal{Z}}{\partial s} - sp_1\right] =
	\\ \notag &
	\frac 12
	\sum_{i,j=1}^\infty 
	\left[
	(i+j)p_ip_j \frac{\partial\log\mathcal{Z}} {\partial p_{i+j}} + ij p_{i+j} \frac{\partial^2 \log\mathcal{Z}}{\partial p_i\partial p_j}
	+ ij p_{i+j} \frac{\partial \log\mathcal{Z}}{\partial p_i} \frac{\partial \log\mathcal{Z}}{\partial p_j}
	\right]
	.
	\end{align*}
	As shown in~\cite{DBKOSS, DLPS}, under the condition $\omega_{0,2} = d_1d_2F_{0,2} + dx_1dx_2/(x_1-x_2)^2$ and the polynomiality property (which are both satisfied in our case) this equation, rewritten in terms of $n$-point functions and symmetrized with respect to the deck transformation, gives the quadratic loop equation. Note that the left hand side of the cut-and-join equation is different in this case and in~\cite{DBKOSS, DLPS}, but symmetrization with respect to the deck transformation makes these contributions holomorphic in any case.  
	This completes the proof of the theorem.
\end{proof}

\begin{remark} Since this method of proof of the topological recursion was already used in the literature (cf.~\cite{DLPS}), it probably falls into the ``known to the experts'' category in this case. We give here its brief account merely for completeness, as the illustration how closely the second Casimir operator $C_2$ (or, equivalently, $\mathcal{F}_2$ in the semi-infinite wedge formalism) is related to the quadratic loop equation. In fact, modulo some local expansion analysis and very general statements on topological recursion, the combination of two theorems of Goulden, Guay-Paquet, and Novak, \cite[Theorem 1.5]{GouldenPolynomiality} and \cite[Theorem 1.2]{GouldenetalGenus0} immediately implies the topological recursion statement.
\end{remark}


\bibliographystyle{alpha}

\bibliography{monotonerev}

\end{document}